\newtheorem{theorem}{Theorem}
\newtheorem{corollary}[theorem]{Corollary}
\newtheorem{lemma}{Lemma}
\newtheorem{remark}{Remark}
\newtheorem{example}{Example}
\newcommand{\differential}{{\rm{d}}}
\newcommand{\Jac}{{\rm{D}}}
\renewcommand{\det}{{\mathrm{det}}}
\newcommand{\vol}{{\mathrm{vol}}}
\newcommand{\Acon}{{\bm{A}_{\rm{con}}}}
\newcommand{\Aint}{{\bm{A}_{\rm{int}}}}
\newcommand{\bcon}{{\bm{b}_{\rm{con}}}}
\def\BibTeX{{\rm B\kern-.05em{\sc i\kern-.025em b}\kern-.08em
    T\kern-.1667em\lower.7ex\hbox{E}\kern-.125emX}}
\begin{document}
\title{Exact Computation of LTI Reach Set from\\Integrator Reach Set with Bounded Input}
\author{Shadi Haddad, Pansie Khodary, Abhishek Halder, \IEEEmembership{Senior Member, IEEE}
\thanks{Shadi Haddad is with the Department of Department of Applied Mathematics, University of California, Santa Cruz, CA 95064, USA, University of California, Santa Cruz, CA 95064, USA, (e-mail: shhaddad@ucsc.edu).}
\thanks{Pansie Khodary and Abhishek Halder are with the Department of Aerospace Engineering, Iowa State University, Ames, IA 50011, USA, (e-mail: pkhodary@iastate.edu, ahalder@iastate.edu).}}

\pagestyle{empty} 
\maketitle
\thispagestyle{empty} 

\bstctlcite{IEEEexample:BSTcontrol} 

\begin{abstract}
We present a semi-analytical method for exact computation of the boundary of the reach set of a single-input controllable linear time invariant (LTI) system with given bounds on its input range. In doing so, we deduce a parametric formula for the boundary of the reach set of an integrator linear system with time-varying bounded input. This formula generalizes recent results on the geometry of an integrator reach set with time-invariant bounded input. We show that the same ideas allow for computing the volume of the LTI reach set.
\end{abstract}


\begin{IEEEkeywords}
Uncertain systems, linear systems, modeling.
\end{IEEEkeywords}



\section{Introduction}\label{sec:introduction}
Motivated by safety and performance verification of controlled dynamical systems, a vast body of works in systems-control literature have studied the problem of computing or approximating the reach sets in general, and the reach sets for linear control systems \cite{pecsvaradi1971reachable,witsenhausen1972remark,varaiya2000reach,girard2005reachability,girard2006efficient,kurzhanskiy2007ellipsoidal,kaynama2009schur} in particular. See \cite{althoff2021set} for a recent survey on this broad topic. 

For our purpose, a working definition for ``reach set" would be ``the set of states that a controlled dynamical system can reach at a fixed time with given bounds on its input range". We will make this precise in Sec. \ref{subsecMainIdea}.

In this study, we propose a new semi-analytical method to compute the reach set of a controllable single input LTI system with known bounds on its input range.

\subsubsection*{Specific Contributions}\label{subsec:contributions}
\begin{itemize}
\item We show (Sec. \ref{sec:BrunovskyNormalForm}) how certain generalizations of recent results \cite{haddad2020convex,haddad2022boundary,haddad2023curious} on the exact geometry (e.g., boundary, volume) of integrator reach sets can enable computing the controllable LTI reach sets with bounded input. Our results reveal that the controllable canonical form serves as a bridge to transfer such geometric results from the integrator to the original state coordinates. 

    \item To realize the aforementioned transfer, we derive the exact integrator reach set (Sec. \ref{sec:Boundary}) and its volume (Sec. \ref{sec:Volume}) with time-varying input range that is determined by the original LTI system. These generalizations should be of independent interest, e.g., in certifying reach set intersections in feedback linearized coordinates \cite{haddad2022certifying}.
\end{itemize}

\subsubsection*{Notations and Preliminaries}\label{subsec:prelim}
We will use the finite set notation $\llbracket n\rrbracket:=\{1,2,\hdots,n\}$. The symbols $\vol_{n},\det,\vert\cdot\vert, \langle\cdot,\cdot\rangle$ denote the $n$ dimensional Lebesgue volume, the determinant, the absolute value, and the standard Euclidean inner product, respectively. 

Consider a \emph{controllable} single input LTI system
\begin{align}
\dot{\bm{z}} = {\bm{A}}\bm{z} + {\bm{b}}v, \quad{\bm{A}}\in\mathbb{R}^{n\times n},\quad{\bm{b}}\in\mathbb{R}^{n},
\label{LTIControllableSingleInput}
\end{align}
with state $\bm{z}\in\mathbb{R}^{n}$ and input $v\in\mathbb{R}$. Let $\bm{q}^{\top}$ denote the last row of the inverse of its controllability matrix, and define the nonsingular matrix 
\begin{align}
\bm{M} := \begin{pmatrix}
\bm{q}^{\top}&
\bm{q}^{\top}{\bm{A}}&
\hdots&
\bm{q}^{\top}{\bm{A}}^{n-1}
\end{pmatrix}^{\top}.
\label{defM}
\end{align}
As is well-known \cite[Sec. 6]{antasklis2007linear}, the invertible linear map $\bm{z}\mapsto\bm{x}:=\bm{Mz}$ transforms \eqref{LTIControllableSingleInput} into a \emph{controllable canonical form}
\begin{align}
\dot{\bm{x}} = \Acon\bm{x} + \bcon v,
\label{LTIControllableCanonicalSingleInput}
\end{align}
where
\begin{align}
&\Acon := \bm{M}\bm{A}\bm{M}^{-1} = \begin{pmatrix}
0 & 1 & 0 & \hdots & 0\\
0 & 0 & 1 & \hdots & 0\\
0 & 0 & 0 & \hdots & 0\\
\vdots & \vdots & \vdots & \vdots & \vdots\\
0 & 0 & 0 & \hdots & 1\\
-c_{0} & -c_{1} & -c_{2} & \hdots & -c_{n-1}
\end{pmatrix},\nonumber\\
& \bcon :=\bm{Mb} = \left(0~0~\hdots~0~1\right)^{\top}.
\label{defAhatbhat}
\end{align}
In particular, $\Acon$ is a \emph{companion matrix} (see e.g., \cite[p. 195]{horn2012matrix}) whose last row is in terms of the (real) coefficients of the monic characteristic polynomial of ${\bm{A}}$, given by
\begin{align}
p(\lambda):= \lambda^{n} + c_{n-1}\lambda^{n-1} + \hdots + c_{1}\lambda + c_{0}.
\label{CharPolyA}
\end{align}
Letting $\bm{c} := (c_0, c_{1}, \hdots, c_{n-1})^{\top}$, we can succinctly write $\Acon$ in \eqref{defAhatbhat} as
\begin{align}
\Acon = \begin{bNiceArray}{cc}
\bm{0}_{(n-1)\times 1} & \bm{I}_{n-1}\\
&\\
\Block{1-2}{-\bm{c}^{\top}}
\end{bNiceArray}.
\label{SuccinctAhatbhat}
\end{align} 


\section{Brunovsky Normal Form and LTI Reach Set}\label{sec:BrunovskyNormalForm}
\subsection{Main Idea}\label{subsecMainIdea}
Defining a new input 
\begin{align}
u := -\langle\bm{c},\bm{x}\rangle + v
\label{defv}
\end{align}
further transforms \eqref{LTIControllableCanonicalSingleInput} into the \emph{Brunovsky normal a.k.a. the $n$th order integrator form} with state $\bm{x}$ and input $u$, given by
\begin{align}
\dot{\bm{x}}=\Aint\bm{x} +\bcon u,
\label{BrunovskyNormalFormSingleInput}
\end{align}
where $\bm{x} \in \mathbb{R}^n$ and $u \in \mathbb{R}$, and
\begin{align}
\Aint := \begin{bNiceArray}{cc}
\bm{0}_{(n-1)\times 1} & \bm{I}_{n-1}\\
&\\
\Block{1-2}{\bm{0}_{1\times n}}
\end{bNiceArray}.
\label{SuccinctAhatbhat}
\end{align}
Given initial condition $\bm{z}_{0}:=\bm{z}(t=0)$ for \eqref{LTIControllableSingleInput}, the same for \eqref{BrunovskyNormalFormSingleInput} is $\bm{x}_0 = \bm{Mz}_0$. 

For continuous bounded $u$, i.e.,  $u(\cdot)\in C\left([0,t]\right)$ and $u_{\min}\leq u(\cdot) \leq u_{\max}$, the \emph{integrator reach set} $\mathcal{X}_t\left(\{\bm{x}_{0}\}\right)\subset\mathbb{R}^{n}$ is the set of states that the controlled dynamics \eqref{BrunovskyNormalFormSingleInput} can reach at a fixed time $t$ starting from $\bm{x}_{0}\in\mathbb{R}^{n}$, i.e.,
\begin{align}
&\mathcal{X}_t\left(\{\bm{x}_0\}\right) := \{\bm{x}(t)\in\mathbb{R}^{n} \mid \eqref{BrunovskyNormalFormSingleInput}, \quad\bm{u}(s)\in[u_{\min},u_{\max}]\nonumber\\
&\qquad\qquad\qquad\qquad\qquad\qquad\quad\forall s\in[0,t]\}.
\label{DefIntReachSet}    
\end{align}
Likewise, for continuous bounded $v$, i.e.,  $v(\cdot)\in C\left([0,t]\right)$ and $v_{\min}\leq v(\cdot) \leq v_{\max}$, the \emph{LTI reach set} is 
\begin{align}
&\mathcal{Z}_t\left(\{\bm{z}_{0}\}\right):=\{\bm{z}(t)\in\mathbb{R}^{n} \mid \eqref{LTIControllableSingleInput}, \quad\bm{v}(s)\in[v_{\min},v_{\max}]\nonumber\\
&\qquad\qquad\qquad\qquad\qquad\qquad\quad\forall s\in[0,t]\}.
\label{DefLTIReachSet}    
\end{align}
Thanks to the Lyapunov convexity theorem \cite{liapounoff1940fonctions,halmos1948range,artstein1990yet,ekeland1990lyapunov}, the reach sets \eqref{DefIntReachSet} and \eqref{DefLTIReachSet} are guaranteed to be compact, convex \cite[Prop. 6.1]{yong1999stochastic}, \cite{varaiya2000reach}, and in particular, \emph{zonoids}\footnote{Zonoids are defined as the range of an atom-free vector measure. These compact convex sets can be seen as the Hausdorff limit of a sequence of zonotopes (zonotopes are affine images of the unit cube); see e.g., \cite{bolker1969class,schneider1983zonoids,goodey1993zonoids,bourgain1989approximation}.} \cite{haddad2020convex,haddad2022boundary,haddad2023curious}. 


In this work, we consider the case that $v_{\min},v_{\max}$ are constants, i.e., the LTI input in \eqref{LTIControllableSingleInput} has time-invariant bounds.

We propose to use the input correspondence \eqref{defv} for explicitly computing \eqref{DefLTIReachSet} in two steps:\\
\textbf{step 1:} explicitly compute the (boundary of the compact) integrator reach set $\mathcal{X}_{t}\left(\{\bm{Mz}_0\}\right)$ for to-be-determined input range $[u_{\min}(s),u_{\max}(s)]$ $\forall s\in[0,t]$.\\
\textbf{step 2:} compute $\mathcal{Z}_{t}\left(\{\bm{z}_0\}\right) = \bm{M}^{-1}\mathcal{X}_{t}\left(\{\bm{Mz}_0\}\right)$.

For \textbf{step 1}, notice from \eqref{defv} that even when the original control input range is time invariant, i.e., $v_{\min}, v_{\max}$ are constants, still the transformed input $u$ has a time-varying range $[u_{\min}(s),u_{\max}(s)]$ $\forall s\in[0,t]$. This is because the input correspondence \eqref{defv} occurs via time-varying $\bm{x}$. Thus, we cannot apply recent results \cite{haddad2020convex,haddad2022boundary,haddad2023curious} explicitly characterizing the integrator reach set boundary with \emph{time-invariant} input range.
 
To circumvent this difficulty, we observe that $\forall s\in[0,t]$, the \emph{linear} state dependence in \eqref{defv} allows us to write
\begin{subequations}
\begin{align}
u_{\min}(s) &= -\langle\bm{c},e^{s\Acon}\bm{Mz}_{0}\rangle + I_{\min}(s),\label{vmin}\\
u_{\max}(s) &= -\langle\bm{c},e^{s\Acon}\bm{Mz}_{0}\rangle + I_{\max}(s),\label{vmax}
\end{align}
\label{uminumax}
\end{subequations}
where
\begin{subequations}
\begin{align}
I_{\min}(s) &:= \underset{v(\cdot)\in C([0,s])}{\inf} \quad I(v)\label{defImin} \\
&\quad\text{subject to}\quad v_{\min}\leq v(\cdot) \leq v_{\max},\nonumber\\
I_{\max}(s) &:= \underset{v(\cdot)\in C([0,s])}{\sup} \quad I(v)\label{defImax} \\
&\quad\text{subject to}\quad v_{\min}\leq v(\cdot) \leq v_{\max},\nonumber
\end{align}
\label{defIminImax}
\end{subequations}
and
\begin{align}
I(v) := v(s) - \!\!\int_{0}^{s}\!\!\!f(\tau)v(\tau)\differential\tau, \: f(\tau):=\big\langle\bm{c},e^{(s-\tau)\Acon}\:\bcon\big\rangle.
\label{defIv}
\end{align}

In Sec. \ref{subsec:BrunovskyInputSet}, we semi-analytically determine $I_{\min}(\cdot), I_{\max}(\cdot)$ from \eqref{defIminImax}, and thereby $u_{\min}(\cdot),u_{\max}(\cdot)$ from \eqref{uminumax}. In Sec. \ref{sec:Boundary}, we derive parametric formula for the boundary $\partial\mathcal{X}_{t}$ of the integrator reach set \eqref{DefIntReachSet} with time-varying input range. They together complete \textbf{step 1}. 

\textbf{Step 2} amounts to a simple computation: transform $\mathcal{X}_t$ via a known linear map to obtain the desired $\mathcal{Z}_t$. This is justified because the map $\bm{x}\mapsto\bm{z}=\bm{M}^{-1}\bm{x}$ is a homeomorphism, so the image of the boundary $\partial\mathcal{X}_{t}$ is the boundary of the image $\mathcal{Z}_t$. 

\begin{remark}\label{DependenceOnb}
We note that the time-varying input range $u_{\min}(\cdot), u_{\max}(\cdot)$ in \eqref{uminumax} depends on both ${\bm{A}},{\bm{b}}$. In particular, their dependence on ${\bm{b}}$ occurs through the matrix $\bm{M}$ in \eqref{defM}, which itself depends on ${\bm{b}}$ via $\bm{q}$.
\end{remark}

\subsection{Input Set in Brunovsky Coordinates}\label{subsec:BrunovskyInputSet}
To determine the time-varying input range $[u_{\min}(\cdot),u_{\max}(\cdot)]$, we first express the $f$ in \eqref{defIv} as a \emph{finite} sum involving the eigenvalues of the state matrix ${\bm{A}}$. 

\begin{lemma}\label{Lemmaf}
Suppose that the state matrix ${\bm{A}}\in\mathbb{R}^{n\times n}$ has $n$ distinct eigenvalues $\lambda_1,\hdots,\lambda_n\in\mathbb{C}$. Then the function $f$ in \eqref{defIv} can be expressed as
\begin{align}
f(\tau) = -\displaystyle\sum_{i=1}^{n}\dfrac{\lambda_{i}^{n}}{\prod_{j\neq i}(\lambda_i - \lambda_j)}e^{\lambda_i(s-\tau)},\quad 0\leq \tau \leq s.
\label{fAsFiniteSum}
\end{align}
\end{lemma}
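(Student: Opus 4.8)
The plan is to compute the matrix exponential $e^{(s-\tau)\Acon}$ explicitly by diagonalizing the companion matrix $\Acon$, which is possible precisely because the hypothesis grants $n$ distinct eigenvalues. Writing $\sigma := s-\tau$, I first note that $f(\tau) = \bm{c}^{\top} e^{\sigma\Acon}\bcon$, and that $\Acon$ and $\bm{A}$ share the characteristic polynomial $p$ in \eqref{CharPolyA} since they are similar. The key structural fact I would exploit is that for a companion matrix of the shape in \eqref{defAhatbhat}, the eigenvector associated with $\lambda_i$ is the Vandermonde vector $\bm{w}_i := (1,\lambda_i,\lambda_i^2,\dots,\lambda_i^{n-1})^{\top}$; this is immediate from the shift structure of the first $n-1$ rows, while the last row reproduces exactly the relation $p(\lambda_i)=0$. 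Collecting the $\bm{w}_i$ as columns of the Vandermonde matrix $\bm{V}$ and setting $\bm{\Lambda}:=\mathrm{diag}(\lambda_1,\dots,\lambda_n)$ gives $\Acon = \bm{V}\bm{\Lambda}\bm{V}^{-1}$ and hence $e^{\sigma\Acon} = \bm{V}\,\mathrm{diag}(e^{\sigma\lambda_1},\dots,e^{\sigma\lambda_n})\,\bm{V}^{-1}$.

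Substituting this into $f$ yields $f(\tau) = \sum_{i=1}^{n} e^{\sigma\lambda_i}\,(\bm{c}^{\top}\bm{w}_i)\,(\bm{V}^{-1}\bcon)_i$, so the task reduces to evaluating the two scalar factors. The first is painless: $\bm{c}^{\top}\bm{w}_i = \sum_{m=0}^{n-1} c_m\lambda_i^m$, which by the eigenvector/characteristic-polynomial relation equals $-\lambda_i^n$. The second factor, $(\bm{V}^{-1}\bcon)_i = (\bm{V}^{-1})_{i,n}$ (since $\bcon = \bm{e}_n$), is the step I expect to be the main obstacle, because it requires a specific entry of the inverse of a Vandermonde matrix rather than the whole inverse. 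I would resolve it through the duality between the Vandermonde inverse and Lagrange interpolation: the $(i,k{+}1)$ entry of $\bm{V}^{-1}$ is the coefficient of $\lambda^k$ in the Lagrange basis polynomial $L_i(\lambda)=\prod_{j\neq i}\frac{\lambda-\lambda_j}{\lambda_i-\lambda_j}$. Taking $k=n-1$ and noting that the numerator $\prod_{j\neq i}(\lambda-\lambda_j)$ is monic of degree $n-1$, the sought coefficient is simply $1/\prod_{j\neq i}(\lambda_i-\lambda_j)$.

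Assembling the two factors then gives $f(\tau) = -\sum_{i=1}^{n} \frac{\lambda_i^n}{\prod_{j\neq i}(\lambda_i-\lambda_j)}\,e^{\lambda_i(s-\tau)}$, which is exactly the claim once $\sigma=s-\tau$ is reinstated.

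As an alternative that bypasses the Vandermonde-inverse computation entirely, I would observe that $f(\tau)=h(s-\tau)$, where $h$ is the impulse response of the controllable canonical pair $(\Acon,\bcon)$ read out through $\bm{c}^{\top}$. Solving $(\lambda\bm{I}-\Acon)\bm{w}=\bcon$ by the same shift argument gives $(\lambda\bm{I}-\Acon)^{-1}\bcon = p(\lambda)^{-1}(1,\lambda,\dots,\lambda^{n-1})^{\top}$, whence the transfer function is $\bm{c}^{\top}(\lambda\bm{I}-\Acon)^{-1}\bcon = (p(\lambda)-\lambda^n)/p(\lambda)$, which is strictly proper. A partial-fraction expansion over the distinct poles $\lambda_i$ produces residues $r_i = (p(\lambda_i)-\lambda_i^n)/p'(\lambda_i) = -\lambda_i^n/\prod_{j\neq i}(\lambda_i-\lambda_j)$, and inverting the Laplace transform term by term recovers the same finite sum. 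Either route leans essentially on the distinctness hypothesis, which guarantees diagonalizability in the first approach and simple poles in the second.
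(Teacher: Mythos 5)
Your proof is correct, and both of your routes are genuinely different from the paper's. The paper works directly with the last column of $e^{\theta\Acon}$: it expands the matrix exponential as a Taylor series and invokes Kalman's closed-form expression for the top-right entry of powers of a companion matrix, $\left(\bm{A}_{\rm{con}}^{r}\right)_{1,n}=\sum_{i}\lambda_i^r/p^{\prime}(\lambda_i)$ (a result tied to generalized Fibonacci sequences), together with the observation that consecutive rows of $e^{\theta\Acon}$ are $\theta$-derivatives of the row above; summing the series gives $g(\theta)=\sum_i e^{\lambda_i\theta}/p^{\prime}(\lambda_i)$, and the inner product with $\bm{c}$ then collapses via $p(\lambda_i)=0$ and $p^{\prime}(\lambda_i)=\prod_{j\neq i}(\lambda_i-\lambda_j)$, exactly as in your final step. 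Your primary route instead diagonalizes $\Acon$ with Vandermonde eigenvectors and extracts the needed column of the Vandermonde inverse through Lagrange interpolation duality; this is self-contained elementary linear algebra that makes the role of the distinct-eigenvalue hypothesis (diagonalizability) transparent, and your resolution of the inverse-Vandermonde entry is right: the $(i,n)$ entry is the leading coefficient $1/\prod_{j\neq i}(\lambda_i-\lambda_j)$ of the $i$th Lagrange basis polynomial. Your alternative route via the strictly proper transfer function $(p(\lambda)-\lambda^n)/p(\lambda)$, simple poles, residues $-\lambda_i^n/p^{\prime}(\lambda_i)$, and termwise inverse Laplace is arguably the cleanest of the three and is the systems-theoretic restatement of the same residue computation. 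What the paper's approach buys is an explicit formula for the entire last column of $e^{\theta\Acon}$ (all derivatives $g^{(k)}(\theta)$), slightly more information than the lemma needs, but it leans on an external citation where your arguments are self-contained; all three arguments consume the distinctness hypothesis in equivalent ways (nonvanishing $p^{\prime}(\lambda_i)$, invertibility of the Vandermonde matrix, or simplicity of the poles).
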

\begin{proof}
For convenience, let $\theta:=s-\tau$. From \eqref{defAhatbhat}, the vector $e^{\theta\Acon}\:\bcon$ equals to the last column of the matrix exponential of $\theta\Acon$.

On the other hand, $\Acon$ being a companion matrix, each consecutive row of $e^{\theta\Acon}$ must be the derivative of its previous row w.r.t. $\theta$. Therefore, it suffices to determine the top right corner (i.e., first row and last column) entry of $e^{\theta\Acon}$, which we denote as $g(\theta)$.

We have
\begin{align}
g(\theta) &= \begin{pmatrix}1 & 0 & 0 & \hdots & 0\end{pmatrix}\:e^{\theta\Acon}\:\begin{pmatrix}
0&
0&
\hdots&
0&
1
\end{pmatrix}^{\!\top}\nonumber\\
&= \!\!\displaystyle\sum_{r=0}^{\infty}\!\begin{pmatrix}1 & 0 & 0 & \hdots & 0\end{pmatrix}\dfrac{\theta^{r}\bm{A}^{r}_{\rm{con}}}{r!}\begin{pmatrix}
0&
0&
\hdots&
0&
1
\end{pmatrix}^{\!\top}\nonumber\\
&=\displaystyle\sum_{r=0}^{\infty}\dfrac{\theta^r}{r!}\displaystyle\sum_{i=1}^{n}\dfrac{\lambda_i^r}{p^{\prime}(\lambda_i)},
\label{gthetaderiv}
\end{align}
where the second line uses the Taylor series for matrix exponential, $p^{\prime}(\cdot)$ denotes the derivative of the characteristic polynomial $p(\cdot)$ in \eqref{CharPolyA}, and the third line follows from a known result due to Dan Kalman \cite[equation (9)]{kalman1982generalized} that gives a formula for the top right corner entry of the $r$th power of a companion matrix in terms of the eigenvalues. The latter result in turn comes from a connection between the powers of the companion matrix and the generalized Fibonacci sequences; see also \cite{chen1996combinatorial,taher2003matrix}.

Exchanging the order of summation in \eqref{gthetaderiv}, we get
\begin{align}
g(\theta) = \displaystyle\sum_{i=1}^{n}\dfrac{1}{p^{\prime}(\lambda_i)}\displaystyle\sum_{r=0}^{\infty}\dfrac{\left(\lambda_i\theta\right)^r}{r!} = \displaystyle\sum_{i=1}^{n}\dfrac{e^{\lambda_i\theta}}{p^{\prime}(\lambda_i)}.
\label{gtheta}
\end{align}
It follows that
\begin{align}
e^{\theta\Acon}\:\bcon = \begin{pmatrix}
g(\theta)&
g^{(1)}(\theta)&
\hdots&
g^{(n-1)}(\theta)
\end{pmatrix}^{\top}
\label{LastColOfExpMatrix}    
\end{align}
where the parenthetical superscript denotes the order of derivative w.r.t. $\theta$.

Combining \eqref{defIv}, \eqref{gtheta} and \eqref{LastColOfExpMatrix}, we obtain
\begin{align}
\big\langle \bm{c},e^{\theta\Acon}\bcon\big\rangle &= \displaystyle\sum_{i=1}^{n}\dfrac{\left(c_0 + c_1\lambda_i + \hdots + c_{n-1}\lambda_i^{n-1}\right)e^{\lambda_i\theta}}{p^{\prime}(\lambda_i)}\nonumber\\
&= \displaystyle\sum_{i=1}^{n}\dfrac{\left(p\left(\lambda_i\right) -\lambda_i^{n}\right)e^{\lambda_i\theta}}{p^{\prime}(\lambda_i)}.
\label{ftaufinal}    
\end{align}
By definition, $p(\lambda_i) = 0$. Taking the logarithmic derivative of the characteristic polynomial $p(\lambda) = \prod_{i=1}^{n}(\lambda - \lambda_i)$, we have $p^{\prime}(\lambda_i)=\prod_{\stackrel{j=1}{j\neq i}}^{n}(\lambda_i-\lambda_j)$. Substituting these back in \eqref{ftaufinal}, and recalling that $\theta=s-\tau$, the proof is complete.
\end{proof}

\begin{example}\label{ExamplefComplexConjugate}
Let $n=2$ and $\lambda_{1,2} = \rho e^{\pm \iota\phi}$, $\iota := \sqrt{-1}$, with $\rho,\phi\neq 0$. Then \eqref{fAsFiniteSum} gives
$$f(\tau)=-\frac{\rho}{\sin\phi}\sin(2\phi + (s-\tau)\rho\sin(2\phi)), \quad 0\leq \tau\leq s.$$
\end{example}

\begin{example}\label{Examplef}
Let $n=3$ and $\lambda_1=1$, $\lambda_{2,3}=\pm\iota$, $\iota := \sqrt{-1}$. Then \eqref{fAsFiniteSum} gives
$$f(\tau)=-\frac{1}{2}\left(e^{s-\tau} + \cos(s-\tau) - \sin(s-\tau)\right),\, 0\leq \tau\leq s.$$
\end{example}

Theorem \ref{Thmuminumax} presented next, shows that the zeros of the continuous function $f$ become relevant for our purpose. For a fixed $s\in(0,t]$, the $f$ in \eqref{fAsFiniteSum} may have multiple (Fig. \ref{ExampleLTIf}), single or no (see footnote\footnote{as in Example \ref{Examplef}, since $e^{\theta}+\cos\theta-\sin\theta$ has no positive roots.}) zeros in its domain $[0,s]$. At our level of generality, it is not possible to bound the number of such zeros in $[0,s]$, and we will numerically find the same.

\begin{figure}[t]
    \centering
    \includegraphics[width=0.97\linewidth]{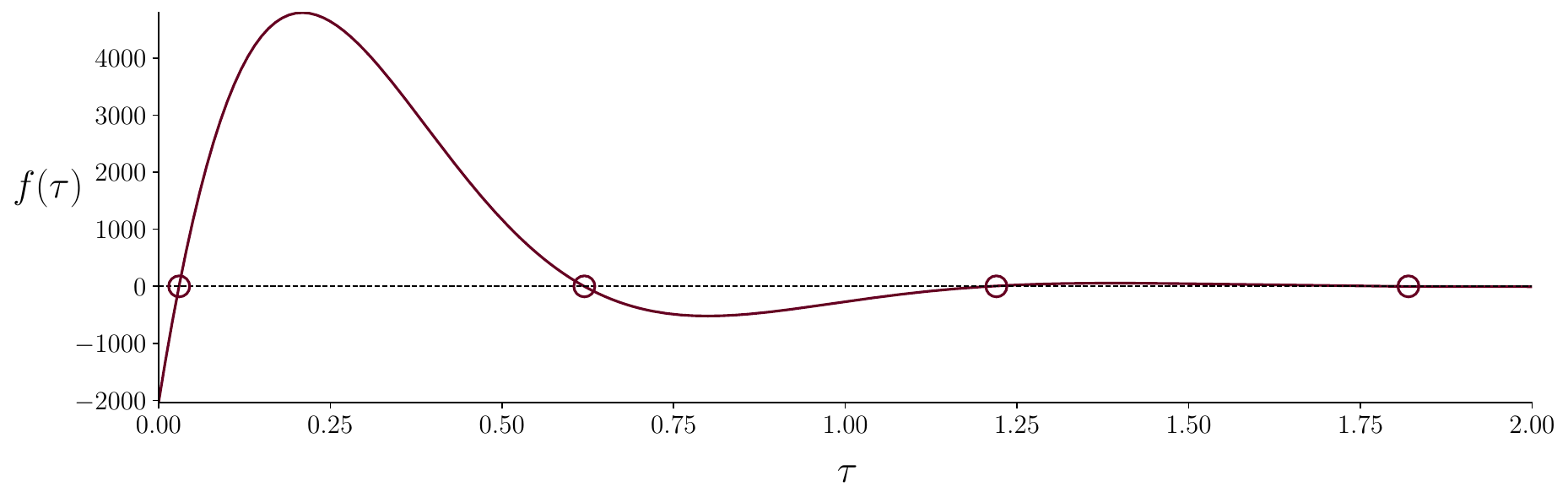}
    \caption{{ The $f(\tau)$ (\emph{solid line}) in \eqref{fAsFiniteSum} for $\bm{A}=\tiny\begin{bmatrix}
6 & 7 & 2\\
-4 & -2 & 1\\
-5 & 3 & 2
\end{bmatrix}$, and its four zeros (\emph{circular markers}) for $\tau\in[0, 2]$. Here $\bm{A}$ has one real and two complex conjugate eigenvalues.}}
\vspace*{-0.2in}
\label{ExampleLTIf}
\end{figure}

Theorem \ref{Thmuminumax} next quantifies how $u_{\min}(s),u_{\max}(s)$, and consequently the set $\mathcal{Z}_{t}\left(\{\bm{z}_0\}\right)$, are determined by the integral of $f$ over its zero sub-level and super-level sets.
\begin{theorem}\label{Thmuminumax}
Suppose $(\bm{A},\bm{b})$ is a controllable pair and matrix $\bm{A}$ has $n$ distinct eigenvalues. For any $0\leq s \leq t$, let $\mathcal{L}^{-}_{f}$ (resp. $\mathcal{L}_{f}^{++}$) denote the zero sublevel (resp. strict superlevel) set of $f$ given by \eqref{fAsFiniteSum} over $[0,s]$, i.e.,
\begin{subequations}
\begin{align}
\mathcal{L}^{-}_{f} &:= \{\tau\in[0,s] \mid f(\tau) \leq 0\}, \label{Sminus}\\ \mathcal{L}^{++}_{f} &:= \{\tau\in[0,s] \mid f(\tau) > 0\}.\label{Splusplus}
\end{align}
\label{defSminusplus}
\end{subequations}
Then the $I_{\min}(\cdot),I_{\max}(\cdot)$ in \eqref{uminumax}-\eqref{defIminImax} can be computed as
\begin{subequations}
\begin{align}
I_{\min}(s) &= v_{\min} -v_{\min}\!\int_{\tau\in\mathcal{L}_{f}^{-}}\!\!\!f(\tau)\differential\tau - v_{\max}\!\int_{\tau\in\mathcal{L}_{f}^{++}}\!\!\! f(\tau)\differential\tau,\label{Imin}\\
I_{\max}(s) &= v_{\max} -v_{\max}\!\int_{\tau\in\mathcal{L}^{-}_{f}}\!\!\!f(\tau)\differential\tau - v_{\min}\!\int_{\tau\in\mathcal{L}_{f}^{++}}\!\!\!f(\tau)\differential\tau.\label{Imax}
\end{align}
\label{IminImax}    
\end{subequations}
Furthermore, the LTI reach set \eqref{DefLTIReachSet} with time-invariant input range $[v_{\min},v_{\max}]$ can be recovered from the integrator reach set \eqref{DefIntReachSet} with time-varying input range $[u_{\min}(s),u_{\max}(s)]$ given by \eqref{uminumax}, as
$$\mathcal{Z}_{t}\left(\{\bm{z}_0\}\right) = \bm{M}^{-1}\mathcal{X}_{t}\left(\{\bm{Mz}_0\}\right).$$ 
\end{theorem}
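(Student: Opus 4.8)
The plan is to treat the two assertions in sequence: first the closed-form evaluation of $I_{\min}(\cdot),I_{\max}(\cdot)$, then the set identity $\mathcal{Z}_{t}=\bm{M}^{-1}\mathcal{X}_{t}$. For the former, I would exploit that the objective $I(v)$ in \eqref{defIv} is an \emph{affine} functional of the control, being the point evaluation $v(s)$ plus the linear integral $-\int_{0}^{s}f(\tau)v(\tau)\differential\tau$, while the constraint $v_{\min}\leq v(\cdot)\leq v_{\max}$ is a pointwise box constraint. Such a problem decouples across $\tau$, so each extremum is attained by a bang-bang selector: to \emph{maximize} $I$ one sets $v(\tau)=v_{\max}$ wherever the coefficient $-f(\tau)$ is nonnegative (i.e.\ $f(\tau)\leq 0$, the set $\mathcal{L}_{f}^{-}$) and $v(\tau)=v_{\min}$ wherever $f(\tau)>0$ (the set $\mathcal{L}_{f}^{++}$), and symmetrically for the minimum. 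Substituting this selector and splitting $\int_{0}^{s}fv\,\differential\tau$ over $\mathcal{L}_{f}^{-}$ and $\mathcal{L}_{f}^{++}$ yields exactly \eqref{Imin}--\eqref{Imax}, with the point-evaluation term contributing the leading $v_{\min}$ or $v_{\max}$.

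The technical care here is reconciling the isolated endpoint term $v(s)$ with the pointwise-optimal interior choice and with the continuity requirement $v(\cdot)\in C([0,s])$. Since $\{s\}$ has Lebesgue measure zero it does not affect the integral, so one may fix $v(s)=v_{\max}$ (resp.\ $v_{\min}$) for the supremum (resp.\ infimum) while letting the interior selector dictate the integral; the generally discontinuous selector is then approximated by continuous controls agreeing with it off arbitrarily small neighborhoods of $\tau=s$ and of the finitely many sign changes of the real-analytic $f$ on $[0,s]$. The contribution of these neighborhoods is $O(\varepsilon)$, so the infimum/supremum over $C([0,s])$ equals the stated value even though it need not be attained, which is consistent with the use of $\inf,\sup$ rather than $\min,\max$.

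For the set identity I would compose two structure-preserving changes of variable. The linear map $\bm{z}\mapsto\bm{x}=\bm{Mz}$ is a bijection carrying each solution of \eqref{LTIControllableSingleInput} driven by $v$ onto the solution of the companion form \eqref{LTIControllableCanonicalSingleInput} driven by the \emph{same} $v$; hence $\bm{M}\mathcal{Z}_{t}$ equals the reach set of \eqref{LTIControllableCanonicalSingleInput} under the constant box $[v_{\min},v_{\max}]$. The feedback substitution \eqref{defv}, $u=-\langle\bm{c},\bm{x}\rangle+v$, then carries each companion-form trajectory onto the integrator trajectory \eqref{BrunovskyNormalFormSingleInput} \emph{without altering the state curve} $\bm{x}(\cdot)$, since $\Acon=\Aint-\bcon\bm{c}^{\top}$. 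Consequently the reachable states at time $t$ are invariant under this substitution, and by the first part the transformed input $u(s)$ ranges precisely over $[u_{\min}(s),u_{\max}(s)]$ as in \eqref{uminumax}. Identifying $\mathcal{X}_{t}$ with the integrator reach set carried by this correspondence and applying the linear homeomorphism $\bm{M}^{-1}$ then gives $\mathcal{Z}_{t}=\bm{M}^{-1}\mathcal{X}_{t}$.

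The main obstacle I anticipate is the faithfulness of the input correspondence at the level of \emph{sets}: one must verify that driving the integrator with the pointwise time-varying range $[u_{\min}(s),u_{\max}(s)]$ reproduces exactly the companion-form reach set, equivalently that the Volterra-type relation $v(s)=u(s)+\langle\bm{c},\bm{x}(s)\rangle$ sends the admissible $v$-box onto the admissible $u$-class in a reach-set-preserving way. Establishing this tightness---as opposed to merely the pointwise range of each $u(s)$---is where the argument must be made rigorous, most cleanly by matching the extremal (boundary-tracing) controls of the two systems through the correspondence; once that is secured, the homeomorphism $\bm{M}^{-1}$ transfers boundaries and the identity follows.
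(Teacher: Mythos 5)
Your treatment of the first claim is correct and is essentially the paper's own argument: since $I(v)$ in \eqref{defIv} is affine in $v$ with integral kernel $-f$, the extremization decouples pointwise over the partition $[0,s]=\mathcal{L}^{-}_{f}\cup\mathcal{L}^{++}_{f}$, and the bang-bang selector you describe yields \eqref{IminImax}. You are in fact more careful than the paper on one point: the paper asserts that the infimum and supremum in \eqref{defIminImax} are \emph{achieved} because $I$ is continuous, but the feasible set is not compact in $C([0,s])$ and the optimal selector is discontinuous (and clashes with the free endpoint value $v(s)$ wherever $f>0$ near $\tau=s$), so attainment generally fails; your mollification argument, which shows the stated values are nevertheless the exact infimum/supremum over continuous controls, is the correct repair.

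For the set identity $\mathcal{Z}_{t}=\bm{M}^{-1}\mathcal{X}_{t}$, you have isolated exactly the step that matters --- the ``tightness'' of the input correspondence at the level of trajectory sets rather than pointwise ranges --- but this gap cannot be closed, because the claimed equality is false in general. Your correspondence argument proves only the inclusion $\bm{M}\mathcal{Z}_{t}\subseteq\mathcal{X}_{t}$: every admissible $v$ induces via \eqref{defv} a $u$-trajectory obeying the pointwise bounds \eqref{uminumax}. The reverse inclusion fails because a selection of the tube $[u_{\min}(\cdot),u_{\max}(\cdot)]$ need not lift to any admissible $v$: the value $u_{\max}(s)$ is approached by controls equal to $v_{\min}$ on essentially all of $[0,s)$ and $v_{\max}$ at $s$, and no single $v$ can do this for all $s$ simultaneously. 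Concretely, take $n=2$ with the system already in companion form, characteristic polynomial $\lambda^{2}+\lambda$ (eigenvalues $0,-1$, so $\bm{c}=(0,1)^{\top}$ and $\bm{M}=\bm{I}_{2}$), $v\in[-1,1]$, $\bm{z}_{0}=\bm{0}$, $t=1$. Then $f(\tau)=e^{-(s-\tau)}>0$, so \eqref{IminImax} gives $u_{\max}(s)=2-e^{-s}=-u_{\min}(s)$. The continuous control $u(\cdot)\equiv u_{\max}(\cdot)$ is admissible for the integrator and reaches a state with second coordinate $\int_{0}^{1}(2-e^{-s})\,\differential s=1+e^{-1}$; but along the original dynamics $\dot{x}_{2}=-x_{2}+v$ with $|v|\leq 1$, one has $x_{2}(1)=\int_{0}^{1}e^{-(1-s)}v(s)\,\differential s\leq 1-e^{-1}$. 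Hence $\bm{M}\mathcal{Z}_{1}\subsetneq\mathcal{X}_{1}$, and the method produces a strict outer approximation rather than the exact reach set. Note also that $u\equiv u_{\max}$ \emph{is} the boundary-tracing (extremal) control of $\mathcal{X}_{1}$ in the direction $(0,1)^{\top}$, so your proposed fix of matching extremal controls across the correspondence fails as well. For comparison, the paper's own proof of this part is a one-sentence appeal to the input correspondence and the discussion of Sec. \ref{subsecMainIdea}, which establishes only the same forward inclusion; the reverse inclusion is never addressed there either, so your proposal is no weaker than the paper --- it has the virtue of making the unproved (and, as shown, unprovable) step explicit.
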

\begin{proof}
For a fixed finite $s>0$, the function $f(\tau)$ in \eqref{defIv} is continuous and bounded in $\tau\in[0,s]$. So $v\mapsto I(v)$ is a continuous functional. Therefore, the infimum and supremum in \eqref{defIminImax} are achieved.

Because the functional $I(v)$ in \eqref{defIv} is linear, we can determine the respective optimal values in terms of the disjoint union
$$[0,s] = \mathcal{L}^{-}_{f} \cup \mathcal{L}_{f}^{++}.$$
To obtain the extrema in \eqref{defIminImax}, we substitute $v(\cdot)\equiv v_{\min}$ or $v_{\max}$ in \eqref{defIv}, depending on when $f(\tau)$ is positive or non-positive, respectively. This gives \eqref{IminImax}.

The recovery of the LTI reach set \eqref{DefLTIReachSet} from the integrator reach set \eqref{DefIntReachSet} follows from the input correspondence \eqref{defv} and the subsequent discussion in Sec. \ref{subsecMainIdea}.
\end{proof}

\section{Parametric boundary}\label{sec:Boundary}

Having determined the $I_{\min}(\cdot),I_{\max}(\cdot)$ from \eqref{IminImax}, and therefore $u_{\min}(\cdot),u_{\max}(\cdot)$ from \eqref{uminumax}, we now turn to deduce an explicit parameterization of the boundary $\partial\mathcal{X}_t\left(\{ \bm{x}_0\}\right)$ for the integrator reach set $\mathcal{X}_t\left(\{ \bm{x}_0\}\right)$ with input $u\in[u_{\min}(s),u_{\max}(s)]$, $0\leq s \leq t$. 

In contrast to prior results in \cite{haddad2020convex,haddad2022boundary,haddad2023curious} where the input uncertainty were restricted to time-invariant sets, \eqref{uminumax} requires us to consider a time-varying set $\left[u_{\min}(s),u_{\max}(s)\right]$. In this setting, Theorem \ref{Thm:ParametricBoundary} presented next, provides an explicit parameterization of $\partial\mathcal{X}_t\left(\{ \bm{x}_0\}\right)$, thus generalizing earlier developments in \cite{haddad2020convex,haddad2022boundary,haddad2023curious}.
\begin{theorem}\label{Thm:ParametricBoundary}
Consider the integrator reach set \eqref{DefIntReachSet} with time-varying range $\left[u_{\min}(s),u_{\max}(s)\right]$, $s\in[0,t]$, given by \eqref{uminumax} and \eqref{IminImax}. For $k,\ell\in\mathbb{N}$, let the indicator function $\mathbf{1}_{k\leq \ell}:=1$ for $k\leq \ell$, and $:=0$ otherwise. Define a function $\bm{\chi}(t,\bm{x}_{0}):\mathbb{R}_{>0}\times\mathbb{R}^{n}\mapsto\mathbb{R}^{n}$ component-wise: $\bm{\chi}_{k}:=\sum_{\ell = 1}^{n}\mathbf{1}_{k\leq \ell}\:\frac{t^{\ell-k}}{(\ell-k)!}\:\bm{x}_{\ell 0}$ $\forall k\in\llbracket n\rrbracket$, where $\bm{x}_{\ell 0}$ denotes the $\ell$-th component of the initial state $\bm{x}_0$.

Let the parameter vector $\bm{\sigma}=(\sigma_{1}, \sigma_{2}, \hdots, \sigma_{n-1})\in\mathcal{W}_{t}\subset \mathbb{R}^{n-1}$ where $\mathcal{W}_{t}$ is the Weyl chamber:
    \begin{align}
        \!\!\mathcal{W}_{t}:=\big\{\bm{\sigma}\in\mathbb{R}^{n-1}\!\mid 0\leq \sigma_{1} \leq \sigma_{2} \leq \hdots \leq \sigma_{n-1}\leq t \big\}.
        \label{Sspace}
    \end{align}
For $0\leq s \leq t$, let
\begin{subequations}
    \begin{align}
      \mu(s)&:=\!\left(u_{\max}(s)-u_{\min}(s)\right)\!/2,\label{defmu}\\
        \nu(s)&:=\!\left(u_{\max}(s)+u_{\min}(s)\right)\!/2,\label{defnu}
    \end{align}
\label{DefMuNu} 
\end{subequations}
and
\begin{align}
     \bm{\xi}(s):=\!\begin{pmatrix}
        \frac{s^{n-1}}{n-1} &
        \frac{s^{n-2}}{n-2} &
        \hdots &
        s &
        1
        \end{pmatrix}^{\top}.
        \label{DefXi}
    \end{align}
Then, $\bm{x}^{\textup{bdy}}
\in\partial\mathcal{X}_{t}\left(\{\bm{x}_{0}\}\right)$
    admits $\bm{\sigma}$-parameterization:
    \begin{align}           
            &\bm{x}^{\textup{bdy}}(\bm{\sigma}) \!= \bm{\chi}(t,\bm{x}_{0})\! + \!\!\int_{0}^{t} \!\!\!\nu(s)\bm{\xi}(t-s) \differential s \pm \!\!\int_{0}^{\sigma_1}\!\!\!\!\mu(s) \bm{\xi}(t-s) \differential s\nonumber\\ 
            &\!\!\!\!\!\mp \int_{\sigma_1}^{\sigma_2}\!\!\!\!\!\mu(s) \bm{\xi}(t-s)\differential s\pm \cdots\pm(-1)^{n} \!\!\!\int_{\sigma_{n-1}}^{t}\!\!\!\!\!\!\mu(s)\bm{\xi}(t-s) \differential s.       \label{ParametricBoundary}
        \end{align}   
\end{theorem}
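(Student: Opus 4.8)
The plan is to construct the parameterization directly from the variation-of-constants formula for the integrator \eqref{BrunovskyNormalFormSingleInput}, combined with the support-function characterization of the boundary of a compact convex body. First I would write the reached state as $\bm{x}(t) = e^{t\Aint}\bm{x}_0 + \int_0^t e^{(t-s)\Aint}\bcon\,u(s)\,\differential s$ and evaluate the two matrix exponentials using the nilpotency of $\Aint$, so that the series truncate at order $n-1$. A direct computation then identifies the drift term $e^{t\Aint}\bm{x}_0$ with $\bm{\chi}(t,\bm{x}_0)$ (the indicator $\mathbf{1}_{k\leq\ell}$ encoding the upper-triangular band of $e^{t\Aint}$), and identifies the last column $e^{(t-s)\Aint}\bcon$ with $\bm{\xi}(t-s)$. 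This reduces the theorem to determining which admissible inputs steer the trajectory to $\partial\mathcal{X}_t$.

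Next I would use convexity. Since $\mathcal{X}_t(\{\bm{x}_0\})$ is compact and convex (a zonoid, as noted after \eqref{DefLTIReachSet}), each boundary point maximizes a linear functional $\bm{x}\mapsto\langle\bm{\eta},\bm{x}\rangle$ over some direction $\bm{\eta}\in\mathbb{S}^{n-1}$. Substituting the Duhamel expression, the input-dependent part becomes $\int_0^t\langle\bm{\eta},\bm{\xi}(t-s)\rangle\,u(s)\,\differential s$; because the admissible set is the pointwise box $[u_{\min}(s),u_{\max}(s)]$, this is maximized pointwise in $s$. Writing $u(s)=\nu(s)+\mu(s)\,\mathrm{sgn}\langle\bm{\eta},\bm{\xi}(t-s)\rangle$ via \eqref{DefMuNu}, the midpoint contribution produces $\bm{\chi}(t,\bm{x}_0)+\int_0^t\nu(s)\bm{\xi}(t-s)\,\differential s$, matching the first two terms of \eqref{ParametricBoundary}.

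The crux, and the step I expect to be the main obstacle, is controlling the switching structure of the remaining bang-bang term $\int_0^t\mu(s)\,\mathrm{sgn}\langle\bm{\eta},\bm{\xi}(t-s)\rangle\,\bm{\xi}(t-s)\,\differential s$. The key observation is that $s\mapsto\langle\bm{\eta},\bm{\xi}(t-s)\rangle$ is a polynomial in $(t-s)$ of degree at most $n-1$, hence changes sign at most $n-1$ times on $[0,t]$. Crucially, this switching surface depends only on $(\Aint,\bcon)$ through $\bm{\xi}$ and on $\bm{\eta}$, and \emph{not} on the input bounds; this is exactly what allows the time-invariant arguments of \cite{haddad2020convex,haddad2022boundary,haddad2023curious} to survive the passage to the time-varying range. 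Ordering the sign-change locations as $\sigma_1\leq\cdots\leq\sigma_{n-1}$ (elements of the Weyl chamber \eqref{Sspace}) and using that the sign alternates across consecutive intervals, the bang-bang integral splits into the alternating-sign sum over $[0,\sigma_1],[\sigma_1,\sigma_2],\ldots,[\sigma_{n-1},t]$ in \eqref{ParametricBoundary}; the outer $\pm$ records the two choices $\pm\bm{\eta}$.

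Finally I would close the correspondence in both directions so that \eqref{ParametricBoundary} sweeps out all of $\partial\mathcal{X}_t$ as $\bm{\sigma}$ ranges over $\mathcal{W}_t$. Surjectivity onto the Weyl chamber is where I would be most careful: since the components of $\bm{\xi}$ are distinct-degree monomials, the map $\bm{\eta}\mapsto\langle\bm{\eta},\bm{\xi}(t-\cdot)\rangle$ is onto all polynomials of degree $\leq n-1$, so the roots can be placed at any prescribed ordered tuple in $[0,t]$ by a suitable $\bm{\eta}$. Degenerate directions with fewer than $n-1$ sign changes correspond to lower-dimensional faces and are absorbed by coalescing switching times $\sigma_i=\sigma_{i+1}$, which the non-strict inequalities defining $\mathcal{W}_t$ already permit.
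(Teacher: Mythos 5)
Your proposal is correct, and its core machinery coincides with the paper's: both arguments reduce the problem to the support function of $\mathcal{X}_t$ with the pointwise (bang-bang) maximization $u^{*}(s)=\nu(s)+\mu(s)\,\mathrm{sgn}\,\langle\bm{\eta},\bm{\xi}(t-s)\rangle$, count the at most $n-1$ sign changes of the switching polynomial, and reparameterize directions $\bm{\eta}\in\mathbb{S}^{n-1}$ by the ordered roots $\bm{\sigma}\in\mathcal{W}_{t}$, with coalescing roots $\sigma_{i}=\sigma_{i+1}$ absorbing directions that have fewer than $n-1$ sign changes. The differences are at the two ends. At the front end, you derive $\bm{\chi}$ and $\bm{\xi}$ from scratch via Duhamel's formula and the nilpotency of $\Aint$, whereas the paper imports the support function \eqref{SptFnIntegratorFinal} as a generalization of \cite[Thm. 1]{haddad2023curious}; your route is self-contained and, incidentally, makes plain that the denominators in \eqref{DefXi} should be the factorials $(n-1)!,(n-2)!,\hdots$ (as they indeed appear in \eqref{jacobian}), not $n-1,n-2,\hdots$. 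At the back end, the paper extracts the boundary from the support function through the Legendre--Fenchel condition \eqref{hstarequalszero} and a vanishing-coefficient argument, whereas you use the supporting-hyperplane characterization directly: the maximizer for each direction $\bm{\eta}$ lies on $\partial\mathcal{X}_{t}$, and every boundary point is such a maximizer. Your version is more elementary and makes the inclusion of the parameterized points in $\partial\mathcal{X}_{t}$ immediate, but the reverse inclusion needs one sentence you left implicit: the exposed face in direction $\bm{\eta}$ is a singleton, because the switching polynomial has finitely many roots, so the maximizing control is determined almost everywhere and the maximizing endpoint is unique; hence every boundary point is exposed and is reached by your parameterization. Relatedly, your remark that degenerate directions correspond to ``lower-dimensional faces'' is slightly off---such directions still expose single points---and the correct bookkeeping is exactly the root coalescence you also invoke.
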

\begin{proof}
Consider the \emph{support function} $
h_{\mathcal{X}_t\left(\{\bm{x}_{0}\}\right)}(\bm{y}) := \underset{\bm{x}\in\mathcal{X}_t\left(\{\bm{x}_{0}\}\right)}{\sup}\:\langle\bm{y},\bm{x}\rangle$
where $\bm{y}\in\mathbb{S}^{n-1}$ (unit sphere in $\mathbb{R}^{n}$). Generalizing our derivations in \cite[Thm. 1]{haddad2023curious}, we find the support function of $\mathcal{X}_t$ with time-varying input range $\left[u_{\min}(s),u_{\max}(s)\right]$ $\forall s\in[0,t]$, as
\begin{align}
\!\!&h_{\mathcal{X}_t\left(\{\bm{x}_{0}\}\right)}\left(\bm{y}\right) =  
\bigg\langle\bm{y},\bm{\chi}(t,\bm{x}_{0})+\!\!\int_{0}^{t}\!\!\nu(s)\bm{\xi}(t-s) \differential s\bigg\rangle\nonumber\\ 
&\quad\qquad+ \int_{0}^{t} \mu(s) \lvert\langle\bm{y},\bm{\xi}(t-s)\rangle\rvert\:\differential s, \quad \bm{y}\in\mathbb{S}^{n-1}.
\label{SptFnIntegratorFinal}	
\end{align}	

Next, recall that the indicator function of any convex set is a convex function that is equal \cite[Thm. 13.2]{rockafellar1997convex} to the Legendre-Fenchel conjugate (denoted as $h^{*}(\cdot)$) of its support function $h(\cdot)$. So, the boundary points $\bm{x}^{\textup{bdy}}\in\partial\mathcal{X}_t$ satisfy
\begin{align}
&h_{\mathcal{X}_t\left(\{\bm{x}_{0}\}\right)}^{*}\left(\bm{x}^{\textup{bdy}}\right) = 0, \nonumber\\
\Leftrightarrow &\underset{\bm{y}\in\mathbb{S}^{n-1}}{\min}\bigg\{\langle-\bm{x}^{\textup{bdy}}, \bm{y} \rangle+ h_{\mathcal{X}_t\left(\bm{x}_{0}\right)}(\bm{y})\bigg\} = 0.
\label{hstarequalszero}
\end{align}
Using \eqref{SptFnIntegratorFinal} and \eqref{DefXi}, we notice that the objective of the minimization problem in \eqref{hstarequalszero} contains an integral of the absolute value of the polynomial $\langle\bm{y},\bm{\xi}(.)\rangle$, which can have at most $n-1$ real roots in $[0,t]$, and therefore at most $n-1$ sign changes occur within its domain $[0,t]$.

Let us collect the real roots of this polynomial in $[0,t]$ in vector $\bm{\sigma}=(\sigma_1,\hdots,\sigma_{n-1})$ that either belongs to $\mathcal{W}_t :=\{0\leq \sigma_1\leq \sigma_2 \leq \hdots \leq \sigma_{n-1}\leq t\}\subset\mathbb{R}^{n-1}$, or are inadmissible. Then, the last integral in \eqref{SptFnIntegratorFinal} can be decomposed as a sum of $n$ sub-integrals: 
\begin{align*}
&\int_{0}^{t}|\mu(s)\langle\bm{y},\bm{\xi}(s)\rangle|\:\differential s = \pm\int_{0}^{\sigma_{1}}\!\!\!\!\!\!\!\mu(s)\langle \bm{y},\bm{\xi}(s)\rangle\:\differential s \nonumber\\
&\mp \!\!\int_{\sigma_{1}}^{\sigma_{2}}\!\!\!\!\!\!\!\mu(s)\langle \bm{y},\bm{\xi}(s)\rangle\:\differential s\pm\hdots \pm (-1)^{n-1}\!\!\!\!\int_{\sigma_{n-1}}^{t}\!\!\!\!\!\!\!\!\!\mu(s)\langle \bm{y},\bm{\xi}(s)\rangle\:\differential s.
\end{align*}
Notice that even if the number of real roots in $[0,t]$ is strictly less than $n-1$, the above expression is generic in the sense the corresponding summand integrals become zero. Notice that all elements of $\mathcal{W}_t$ are the roots for some $\bm{y}\in\mathbb{S}^{n-1}$. So for \eqref{hstarequalszero} to hold for arbitrary $\bm{\sigma}\in\mathcal{W}_t$, we need to equate the minimum value of a linear function in $\bm{y}$ to zero, which is possible if and only if the coefficient of $\bm{y}$ vanishes. This gives the parametric boundary \eqref{ParametricBoundary} where the parameter $\bm{\sigma}$ sweeps the set $\mathcal{W}_{t}$.
\end{proof}
\begin{remark}\label{Remark:RecoveryOfTimeInvariantRange}
In the special case $u_{\min},u_{\max}$, and thus $\mu,\nu$ in \eqref{DefMuNu} are constants, the parametric boundary formula \eqref{ParametricBoundary} recovers \cite[eq. (26)]{haddad2023curious} via change of variable $t-s\mapsto s$.    
\end{remark}
The plus-minus appearing in the parameterization \eqref{ParametricBoundary} has the following consequence.
\begin{corollary}\label{CorollaryUpperLower}
The integrator reach set \eqref{DefIntReachSet} with time-varying range $\left[u_{\min}(\cdot),u_{\max}(\cdot)\right]$ given by \eqref{uminumax} and \eqref{IminImax}, has two bounding surfaces $\partial \mathcal{X}_{t}^{\textup{upper}}$ and $\partial \mathcal{X}_{t}^{\textup{lower}}$, i.e.,
$$\partial\mathcal{X}_{t}\left(\{\bm{x}_{0}\}\right) := \partial\mathcal{X}_{t}^{\textup{upper}}\left(\{\bm{x}_{0}\}\right)\cup \partial\mathcal{X}_{t}^{\textup{lower}}\left(\{\bm{x}_{0}\}\right).$$
\end{corollary}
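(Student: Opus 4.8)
The plan is to extract the two surfaces directly from the single genuine sign ambiguity already present in \eqref{ParametricBoundary}. First I would note that although \eqref{ParametricBoundary} displays a sign in front of every sub-integral, these signs are not independent: in the proof of Theorem~\ref{Thm:ParametricBoundary} each $\sigma_i$ is a root of the polynomial $s\mapsto\langle\bm{y},\bm{\xi}(t-s)\rangle$, across which this polynomial changes sign, so the sign pattern on the successive intervals $[0,\sigma_1],[\sigma_1,\sigma_2],\hdots,[\sigma_{n-1},t]$ is forced to alternate. Consequently, once the sign on the first interval $[0,\sigma_1]$ is fixed, all remaining signs are determined, and \eqref{ParametricBoundary} encodes exactly \emph{two} parameterized maps rather than a $2^{n}$-fold family.

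Accordingly, I would write $\bm{x}^{\textup{bdy},+}(\bm{\sigma})$ and $\bm{x}^{\textup{bdy},-}(\bm{\sigma})$ for the maps obtained by selecting, respectively, the upper and the lower alternative in the leading $\pm$, and define
\begin{align*}
\partial\mathcal{X}_{t}^{\textup{upper}}(\{\bm{x}_{0}\}) &:= \{\bm{x}^{\textup{bdy},+}(\bm{\sigma}) \mid \bm{\sigma}\in\mathcal{W}_{t}\},\\
\partial\mathcal{X}_{t}^{\textup{lower}}(\{\bm{x}_{0}\}) &:= \{\bm{x}^{\textup{bdy},-}(\bm{\sigma}) \mid \bm{\sigma}\in\mathcal{W}_{t}\}.
\end{align*}
Since the two alternatives differ only by a global sign on the entire collection of $\mu$-integrals, these maps take the form $\bm{x}^{\textup{bdy},\pm}(\bm{\sigma}) = \bm{p}_{0} \pm \bm{m}(\bm{\sigma})$, where $\bm{p}_{0} := \bm{\chi}(t,\bm{x}_{0}) + \int_{0}^{t}\nu(s)\bm{\xi}(t-s)\,\differential s$ is a fixed point independent of $\bm{\sigma}$ and $\bm{m}(\bm{\sigma})$ gathers the alternating $\mu$-integrals. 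Thus $\partial\mathcal{X}_{t}^{\textup{upper}}$ and $\partial\mathcal{X}_{t}^{\textup{lower}}$ are point reflections of one another through the common center $\bm{p}_{0}$.

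Completeness then follows immediately from Theorem~\ref{Thm:ParametricBoundary}: every $\bm{x}^{\textup{bdy}}\in\partial\mathcal{X}_{t}(\{\bm{x}_{0}\})$ is produced by \eqref{ParametricBoundary} for some $\bm{\sigma}\in\mathcal{W}_{t}$ and some admissible sign, and by the first step there are only the two sign patterns just described. Hence every boundary point lies in $\partial\mathcal{X}_{t}^{\textup{upper}}\cup\partial\mathcal{X}_{t}^{\textup{lower}}$, while conversely both sets consist of boundary points, which gives the claimed decomposition.

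The step I expect to be the main obstacle is justifying the labels ``upper'' and ``lower'' rather than merely ``two surfaces'', i.e., tying the binary sign choice to a fixed geometric orientation. Concretely, the sign on $[0,\sigma_1]$ is $\textup{sgn}\langle\bm{y},\bm{\xi}(t-s)\rangle$ for small $s$, which I would trace back to the sign of the leading coefficient of the polynomial $\langle\bm{y},\bm{\xi}(\cdot)\rangle$, i.e., of a fixed component of the outward normal $\bm{y}$; this partitions $\bm{y}\in\mathbb{S}^{n-1}$ into two hemispheres that generate the two caps. A secondary point is to confirm the two surfaces are genuinely distinct and meet only along the ``equator'' $\{\bm{\sigma}\in\mathcal{W}_{t} \mid \bm{m}(\bm{\sigma})=\bm{0}\}$, which follows from the reflection structure since $\bm{p}_{0}$ is the only fixed point of the reflection.
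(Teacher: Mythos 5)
Your proposal is correct and takes essentially the same approach as the paper: the paper's proof likewise identifies $\partial\mathcal{X}_{t}^{\textup{upper}}$ and $\partial\mathcal{X}_{t}^{\textup{lower}}$ with the two feasible choices of alternating signs in \eqref{ParametricBoundary}, writing them out explicitly in exactly the form of your $\bm{p}_{0}\pm\bm{m}(\bm{\sigma})$ decomposition. Your supplementary observations (the forced alternation of signs across the roots $\sigma_i$, and the point-reflection symmetry through the center) are consistent with the paper, which exploits that same antipodal structure in the proof of Theorem~\ref{Thm:surjective}.
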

\begin{proof}
The bounding surfaces $\partial \mathcal{X}_{t}^{\textup{upper}}$ and $\partial \mathcal{X}_{t}^{\textup{lower}}$ correspond to two feasible choices of alternating signs in \eqref{ParametricBoundary}. Specifically, $\bm{x}^{\textup{upper}}(\bm{\sigma})\in\partial \mathcal{X}_{t}^{\textup{upper}}, \bm{x}^{\textup{lower}}(\bm{\sigma})\in\partial \mathcal{X}_{t}^{\textup{lower}}$ admit the following parameterization:
\begin{subequations}
\begin{align}
&\bm{x}^{\textup{upper}}(\bm{\sigma}) \!=\! \bm{\chi}(t,\bm{x}_{0}) \!+ \!\!\int_{0}^{t} \!\!\!\nu(s)\bm{\xi}(t-s) \differential s + \!\!\!\int_{0}^{\sigma_1}\!\!\!\!\!\!\mu(s) \bm{\xi}(t-s) \differential s\nonumber\\
&\!-\!\!\!\int_{\sigma_1}^{\sigma_2}\!\!\!\!\!\!\mu(s) \bm{\xi}(t-s)\differential s + \cdots +\! (-1)^{n} \!\!\!\int_{\sigma_{n-1}}^{t}\!\!\!\!\!\!\!\mu(s)\bm{\xi}(t-s) \differential s,\!
\label{xupperExplicit}\\
&\bm{x}^{\textup{lower}}(\bm{\sigma}) \!=\! \bm{\chi}(t,\bm{x}_{0}) \!+ \!\!\int_{0}^{t} \!\!\!\nu(s)\bm{\xi}(t-s) \differential s - \!\!\!\int_{0}^{\sigma_1}\!\!\!\!\!\!\!\mu(s) \bm{\xi}(t-s) \differential s\nonumber\\
&\!+\!\!\!\int_{\sigma_1}^{\sigma_2}\!\!\!\!\!\!\mu(s) \bm{\xi}(t-s)\differential s - \cdots - (-1)^{n} \!\!\!\int_{\sigma_{n-1}}^{t}\!\!\!\!\!\!\!\!\mu(s)\bm{\xi}(t-s) \differential s.\!
\label{xlowerExplicit}
\end{align} 
\label{upperlowerexplicit}
\end{subequations}
\end{proof}
\begin{remark}\label{remark:boundaryonlydependsonextremal}
From (\ref{ParametricBoundary}), the boundary $\partial\mathcal{X}_{t}\left(\{\bm{x}_{0}\}\right)$ only depends on the extremal curves $u_{\min}(\cdot), u_{\max}(\cdot)$ of the time-varying input range.
\end{remark}
\begin{remark}\label{remark:semialgebraic}
Unlike the case of time invariant input range as in \cite{haddad2022boundary,haddad2023curious}, the integrator reach set $\mathcal{X}_t$ with time-varying input range $[u_{\min}(\cdot),u_{\max}(\cdot)]$, though still a zonoid, is no longer semialgebraic in general. In fact,  formula \eqref{ParametricBoundary} shows that the zonoid $\mathcal{X}_t$, and therefore $\mathcal{Z}_{t}$, is semialgebraic iff $u_{\min}(s), u_{\max}(s)$ are polynomials $\forall 0\leq s\leq t$.
\end{remark}
The following example illustrates how the above results can be brought together to
compute the LTI reach set \eqref{DefLTIReachSet} via \textbf{step 1} and \textbf{step 2} mentioned in Sec. \ref{subsecMainIdea}.

\begin{example}\label{ExampleZt}
Consider the LTI reach set $\mathcal{Z}_t$ in \eqref{DefLTIReachSet} at $t=3$ with $n=2,\bm{z}_0=\bm{0}_{2\times 1}$, for a system of the form \eqref{LTIControllableSingleInput}:
\begin{align}
\begin{pmatrix}
\dot{z}_1 \\ \dot{z}_2 \end{pmatrix} =\underbrace{\begin{pmatrix}
0.1 & 0.2 \\ -0.3 & 0.1 \end{pmatrix}}_{\bm{A}}
\begin{pmatrix}z_1 \\ z_2 \end{pmatrix}+\underbrace{\begin{pmatrix}
1 \\ 2 \end{pmatrix}}_{\bm{b}}v,
\label{example2d}
\end{align}
where the input ($v(\cdot)$) trajectories are in $\{v(\cdot)\in C([0,t])\mid v(s)\in[-0.2,0.2]\forall s\in[0,t]\}$. So $v_{\min} = -0.2,v_{\max}=0.2$.

To compute $\mathcal{Z}_t$, we follow \textbf{step 1} and \textbf{step 2} from Sec. \ref{subsecMainIdea}. Specifically, we have
\begin{align}
\bm{M}=\begin{pmatrix}
20/11 & -10/11\\
5/11 & ~~~~3/11
\end{pmatrix},\;\bm{M}^{-1}=\begin{pmatrix}
   ~3/10 & 1\\       
   -1/2 & 2 
\end{pmatrix},
\label{MinvMExample}
\end{align}
and 
\eqref{example2d} transforms to controllable canonical form
\begin{align}\begin{pmatrix}
\dot{x}_1 \\ \dot{x}_2 \end{pmatrix} &=\underbrace{\begin{pmatrix}
0 & 1 \\ -0.07 & 0.20 \end{pmatrix}
}_{\Acon}\begin{pmatrix}x_1 \\ x_2 \end{pmatrix}+\underbrace{\begin{pmatrix}
0 \\ 1 \end{pmatrix}}_{\bcon}v,
\label{example2dcan}
\end{align}
i.e., $\bm{c} := (0.07, -0.20)^{\top}$. We find that $\bm{A}$ has eigenvalues $\lambda_{1,2}=\rho e^{\pm\iota\phi}$, $\iota:=\sqrt{-1}$, with $\rho=0.2646$, $\phi=1.1832$, and the $f$ in \eqref{fAsFiniteSum} is as in \textbf{Example \ref{ExamplefComplexConjugate}}.

With this $f$, we numerically compute $I_{\min}(s), I_{\max}(s)$ from \eqref{IminImax}, and thereby the extremal trajectories $u_{\min}(s), u_{\max}(s)$ from \eqref{uminumax} $\forall s\in[0,t=3]$. Using these $u_{\min}(s), u_{\max}(s)$ and the initial condition $\bm{x}_0=\bm{M}\bm{z}_{0}=\bm{0}_{2\times 1}$, we use \eqref{ParametricBoundary} to explicitly compute $\partial\mathcal{X}_{t}\left(\{\bm{0}_{2\times 1}\}\right)$. 
We used trapezoidal approximations with step-size $\Delta\tau=0.01$ for evaluating the integrals in \eqref{IminImax} and \eqref{ParametricBoundary}.
This completes the \textbf{step 1}. 

In \textbf{step 2}, we map $\partial\mathcal{X}_{t}\left(\{\bm{0}_{2\times 1}\}\right)$ back to $\partial\mathcal{Z}_{t}\left(\{\bm{0}_{2\times 1}\}\right)$ via the known linear map $\bm{x}^{{\rm{bdy}}}\mapsto \bm{M}^{-1}\bm{x}^{{\rm{bdy}}} \in \partial\mathcal{Z}_{t}$. 

Fig. \ref{figExampleLTI} plots the snapshots of $\mathcal{Z}_t\left(\{\bm{0}_{2\times 1}\}\right)$ computed as above, at $t= 1, 1.5, 2, 2.5$ and $3$, together with 8 sample state trajectories of \eqref{example2d} with the same zero initial state. These sample state trajectories correspond to 8 randomly sampled truncated Gaussian process input paths in $\{v(\cdot)\in C([0,t])\mid v(s)\in[-0.2,0.2]\forall s\in[0,t]\}$.
\begin{figure}[t]
    \centering    \includegraphics[width=0.85\linewidth]{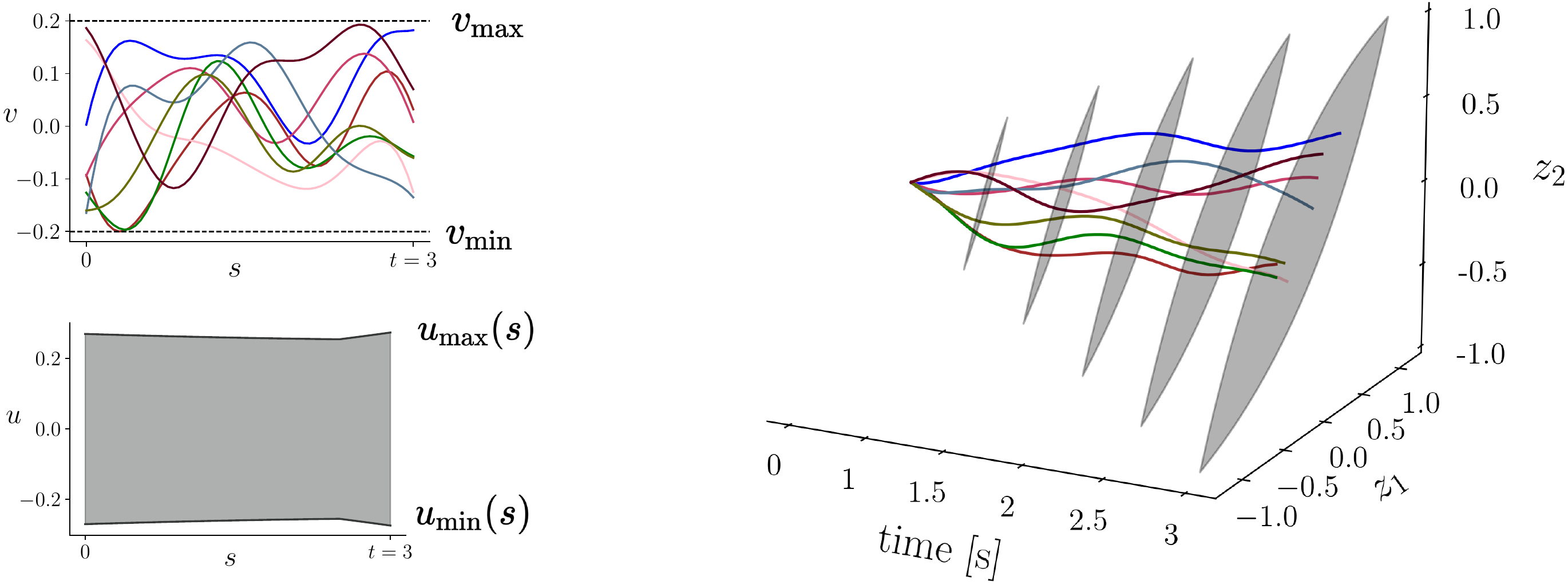}
    \caption{{\small{The reach sets $\mathcal{Z}_t\left(\{\bm{0}_{2\times 1}\}\right)$ at $t=1, 1.5, 2,2.5, 3$ (\emph{grey filled}) for \textbf{Example \ref{ExampleZt}} shown in the right plot. These sets were computed via the proposed two step method in Sec. \ref{subsecMainIdea}. The 8 sample state trajectories shown here correspond to 8 randomly sampled truncated Gaussian process input paths in $\{v(\cdot)\in C([0,t])\mid v(s)\in[-0.2,0.2]\forall s\in[0,t]\}$ shown in the top left inset plot. The bottom left inset plot shows the time-varying range $[u_{\min}(s),u_{\max}(s)]$.}}}
\vspace*{-0.2in}
\label{figExampleLTI}
\end{figure}

\end{example}

\section{Volume}\label{sec:Volume}
Building on \textbf{step 1} and \textbf{step 2} from Sec. \ref{subsecMainIdea}, we now show that the same ideas also help computing the volume of the LTI reach set, i.e., $\vol_{n}\left(\mathcal{Z}_{t}\left(\{\bm{z}_{0}\}\right)\right)$. 

As discussed in earlier works \cite{haddad2020convex}, \cite[Sec. VI]{haddad2023curious}, having a computational handle on volume is helpful in providing ground truth to quantify the conservatism of numerical algorithms which over-approximate the reach set via simpler geometric shapes such as variants of ellipsoids \cite{kurzhanski1997ellipsoidal,durieu2001multi,halder2018parameterized,halder2020smallest,haddad2021anytime} or variants of zonotopes \cite{girard2005reachability,althoff2011zonotope,althoff2015introduction,scott2016constrained,kochdumper2020sparse,kousik2022ellipsotopes,kochdumper2023open}. 



Since $\mathcal{X}_t$ is convex, each $\bm{x} \in \mathcal{X}_t$ can be written as a convex combination of two points in $\mathcal{X}_t$. In the following, we show a stronger result: any point in $\mathcal{X}_t$ can be written as a convex combination of a pair $(\bm{x}^{\textup{upper}},\bm{x}^{\textup{lower}})$ evaluated at the same parameter $\bm{\sigma}\in\mathcal{W}_{t}$. This result will find use in volume computation (Thm. \ref{Thm:Volume}).

\begin{theorem}\label{Thm:surjective}
For $\bm{\sigma}\in\mathcal{W}_t$, let $\bm{x}^{\textup{upper}}(\bm{\sigma})\in \partial \mathcal{X}_{t}^{\textup{upper}}(\bm{\sigma})$, $\bm{x}^{\textup{lower}}(\bm{\sigma})\in\partial \mathcal{X}_{t}^{\textup{lower}}(\bm{\sigma})$ where $\partial \mathcal{X}_{t}^{\textup{upper}},\partial \mathcal{X}_{t}^{\textup{lower}}$are as in Corollary \ref{CorollaryUpperLower}. For any $\bm{x}\in\mathcal{X}_{t}$, there exists $\left(\bm{\sigma},\lambda\right)\in\mathcal{W}_t \times [0,1]$ such that
\begin{align}
\bm{x}=\bm{\pi}(\bm{\sigma}, \lambda) &:=\lambda \bm{x}^{\textup{upper}}(\bm{\sigma})+(1-\lambda)\bm{x}^{\textup{lower}}(\bm{\sigma}),
\label{mapx_s}
\end{align}
i.e., the parametric map $\bm{\pi}: \mathcal{W}_t \times [0,1] \to \mathcal{X}_t$ is surjective. 
\end{theorem}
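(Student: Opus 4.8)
The plan is to exploit the central symmetry of $\mathcal{X}_t$ together with the explicit antipodal relationship between the two boundary pieces in \eqref{upperlowerexplicit}, and then cover the body by the chords emanating from its center. First I would identify the symmetry center $\bm{x}^c := \bm{\chi}(t,\bm{x}_0) + \int_0^t \nu(s)\bm{\xi}(t-s)\,\differential s$ and observe directly from \eqref{xupperExplicit}--\eqref{xlowerExplicit} that, for every fixed $\bm{\sigma}\in\mathcal{W}_t$, the upper and lower points are antipodal about this center, i.e. $\bm{x}^{\textup{upper}}(\bm{\sigma}) + \bm{x}^{\textup{lower}}(\bm{\sigma}) = 2\bm{x}^c$, since the two formulas differ only in the signs of all the $\mu$-integrals. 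Hence the segment $\lambda\mapsto\bm{\pi}(\bm{\sigma},\lambda)$ passes through $\bm{x}^c$ at $\lambda=\tfrac12$, and as $\lambda$ ranges over $[0,1]$ it traces the full diameter $[\bm{x}^{\textup{lower}}(\bm{\sigma}),\bm{x}^{\textup{upper}}(\bm{\sigma})]$ of $\mathcal{X}_t$ through its center.

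Next I would record two structural facts. First, $\mu(s)>0$ for all $s$ whenever $v_{\min}<v_{\max}$: from \eqref{IminImax} the cancellation of the $\nu$-terms gives $u_{\max}(s)-u_{\min}(s)=I_{\max}(s)-I_{\min}(s)=(v_{\max}-v_{\min})\big(1-\int_{\mathcal{L}^{-}_f}f+\int_{\mathcal{L}^{++}_f}f\big)\geq v_{\max}-v_{\min}>0$, because $-\int_{\mathcal{L}^{-}_f}f\geq 0$ and $\int_{\mathcal{L}^{++}_f}f\geq 0$. Together with controllability of $(\bm{A},\bm{b})$ this makes the generating curve $\bm{\xi}$ span $\mathbb{R}^n$, so the zonoid $\mathcal{X}_t$ is full-dimensional and its symmetry center $\bm{x}^c$ is an interior point. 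Second, by Theorem \ref{Thm:ParametricBoundary} and Corollary \ref{CorollaryUpperLower}, letting $\bm{\sigma}$ sweep $\mathcal{W}_t$ exhausts the entire boundary, i.e. $\partial\mathcal{X}_t = \{\bm{x}^{\textup{upper}}(\bm{\sigma}):\bm{\sigma}\in\mathcal{W}_t\}\cup\{\bm{x}^{\textup{lower}}(\bm{\sigma}):\bm{\sigma}\in\mathcal{W}_t\}$.

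Surjectivity then follows by a radial argument. Fix $\bm{x}\in\mathcal{X}_t$; if $\bm{x}=\bm{x}^c$ take any $\bm{\sigma}$ and $\lambda=\tfrac12$. Otherwise, since $\bm{x}^c$ is interior and $\mathcal{X}_t$ is compact convex, the ray from $\bm{x}^c$ through $\bm{x}$ meets $\partial\mathcal{X}_t$ in a point $\bm{b}$ with $\bm{x}\in[\bm{x}^c,\bm{b}]$. By the second fact, $\bm{b}=\bm{x}^{\textup{upper}}(\bm{\sigma}^*)$ or $\bm{b}=\bm{x}^{\textup{lower}}(\bm{\sigma}^*)$ for some $\bm{\sigma}^*\in\mathcal{W}_t$, and by the antipodal identity its partner $2\bm{x}^c-\bm{b}$ is precisely the other one, so $[\bm{x}^c,\bm{b}]\subset[\bm{x}^{\textup{lower}}(\bm{\sigma}^*),\bm{x}^{\textup{upper}}(\bm{\sigma}^*)]=\bm{\pi}(\bm{\sigma}^*,[0,1])$. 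Writing $\bm{x}=(1-\mu)\bm{x}^c+\mu\bm{b}$ with $\mu\in[0,1]$ and substituting $\bm{x}^c=\tfrac12\big(\bm{x}^{\textup{upper}}(\bm{\sigma}^*)+\bm{x}^{\textup{lower}}(\bm{\sigma}^*)\big)$ gives $\bm{x}=\bm{\pi}(\bm{\sigma}^*,\lambda)$ with $\lambda=\tfrac{1+\mu}{2}\in[0,1]$ (or $\tfrac{1-\mu}{2}$ in the lower case), so $\bm{x}\in\mathrm{range}(\bm{\pi})$.

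I expect the hard part to be the rigorous justification of the two structural facts rather than the chord geometry: confirming that $\bm{x}^c$ is genuinely interior (which needs full-dimensionality of the reach set, hence controllability and a nonempty input range), and that the $\bm{\sigma}$-parameterization of Theorem \ref{Thm:ParametricBoundary} really attains \emph{every} boundary point, including non-smooth or non-exposed points where the supporting direction $\bm{y}$ and its associated roots $\bm{\sigma}(\bm{y})$ need not be unique. Once these are secured, the antipodal structure of \eqref{upperlowerexplicit} reduces the claim to the elementary fact that a convex body is covered by the diameters through an interior center of symmetry.
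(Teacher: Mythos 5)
Your proof is correct and takes essentially the same route as the paper's: identify the center of symmetry $\bm{\eta}_t=\bm{\chi}(t,\bm{x}_{0})+\int_{0}^{t}\nu(s)\bm{\xi}(t-s)\,\differential s$, observe from \eqref{upperlowerexplicit} that $\bm{x}^{\textup{upper}}(\bm{\sigma})$ and $\bm{x}^{\textup{lower}}(\bm{\sigma})$ are antipodal about it, and cover the compact convex set $\mathcal{X}_t$ by the chords through that center. The only difference is one of rigor, not of method: you explicitly verify the two facts the paper leaves implicit (that $\mu(\cdot)>0$ so the center is interior, and that the $\bm{\sigma}$-parameterization of Theorem \ref{Thm:ParametricBoundary} attains every boundary point), whereas the paper simply asserts the chord-crossing property in translated coordinates.
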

\begin{proof}
W.l.o.g., we prove our claim in a translated coordinate system with origin at $\bm{\eta}_t:=  \bm{\chi}(t,\bm{x}_{0}) \!+ \!\!\int_{0}^{t} \!\nu(s)\bm{\xi}(t-s) \differential s$. 

From \eqref{upperlowerexplicit}, in this translated coordinates, we have $\bm{x}^{\textup{upper}}_{\text{translated}} (\bm{\sigma})= \bm{x}^{\textup{upper}}(\bm{\sigma}) - \bm{\eta}_t$, $\bm{x}^{\textup{lower}}_{\text{translated}}(\bm{\sigma}) = \bm{x}^{\textup{lower}}(\bm{\sigma}) - \bm{\eta}_t$, and the bounding hypersurfaces are bijectively related via the antipodal map $\mathscr{A}: \partial \mathcal{X}_{t,\text{translated}}^{\textup{upper}} \to \partial \mathcal{X}_{t,\text{translated}}^{\textup{lower}}$ given by
$$\bm{x}^{\textup{lower}}_{\text{translated}}(\bm{\sigma})=\mathscr{A}\left(\bm{x}_{\text{translated}}^{\textup{upper}}(\bm{\sigma})\right)=-\bm{x}_{\text{translated}}^{\textup{upper}}(\bm{\sigma}).$$
So for any fixed $\bm{\sigma}\in\mathcal{W}_t$, the line segment $\bm{\ell}(\bm{\sigma}) := \lambda {\bm{x}}_{\text{translated}}^{\textup{upper}}(\bm{\sigma})+(1-\lambda)\bm{x}_{\text{translated}}^{\textup{lower}}(\bm{\sigma})$ generated by varying $\lambda \in [0, 1]$, will pass through the origin.
Since $\mathcal{X}_t$ or equivalently $\mathcal{X}_{t,\text{translated}}$ is convex and compact, for any $\tilde{\bm{x}}\in\mathcal{X}_{t,\text{translated}}$, the line through $\tilde{\bm{x}}$ and the origin crosses the boundary at the antipodal points $\bm{x}_{\text{translated}}^{\textup{upper}}(\bm{\sigma})$ and  $\bm{x}_{\text{translated}}^{\textup{lower}}(\bm{\sigma})$ for some $\bm{\sigma} \in \mathcal{W}_t$ at $\lambda=0,1$ respectively. Thus $\bm{\pi}$ is surjective.
\end{proof}
\begin{remark}\label{Remark:InteriorParam}
We clarify here that \eqref{ParametricBoundary} gives a parameterization of $\partial\mathcal{X}_t$, while \eqref{mapx_s} gives a parameterization of $\mathcal{X}_t$. 
\end{remark}
\begin{theorem}\label{Thm:Volume}
The $n$ dimensional Lebesgue volume of the  LTI reach set \eqref{DefLTIReachSet} at time $t$, is
\begin{align}
\!\!\!\vol_{n}\left(\mathcal{Z}_{t}\left(\{\bm{z}_{0}\}\right)\right)=\frac{1}{|\det(\bm{M})|}\int_{0}^{1} \!\!\!\int_{\mathcal{W}_t}\!\!\!  \begin{vmatrix}\operatorname{det}\left(\Jac\bm{\pi}\right)\end{vmatrix} \differential \bm{\sigma}\differential\lambda
\label{volumeZ}
\end{align} 
where the nonsingular $\bm{M}\in \mathbb{R}^{n\times n}$ is given by \eqref{defM}, $\Jac\bm{\pi}$ denotes the Jacobian of $\bm{\pi}$ in \eqref{mapx_s}, and $\bm{\sigma} \in \mathcal{W}_t$ as in \eqref{Sspace}. In particular,
\begin{align}
&\begin{vmatrix}\operatorname{det}\left(\Jac\bm{\pi}\right)\end{vmatrix} = \mu({\sigma}_1)\mu({\sigma}_2)  \hdots\mu({\sigma}_{n-1})|(4\lambda-2)^{n-1}|
\nonumber \\
 &
\begin{vmatrix} 
\det\begin{pmatrix}
{\dfrac{(t-\sigma_1)^{n-1}}{(n-1)!}}&\cdots&\dfrac{(t-\sigma_{{n-1}})^{n-1}}{(n-1)!}& \zeta_{1}(\bm{\sigma})\\
\vdots&\vdots&\vdots&\vdots\\
(t-\sigma_1)&\cdots&(t-\sigma_{n-1})& \zeta_{n-1}(\bm{\sigma})\\
1&\cdots&1& \zeta_{n}(\bm{\sigma})
\end{pmatrix}
\end{vmatrix},
\label{jacobian}    
\end{align}
where $\bm{\zeta}(\bm{\sigma}):=\bm{x}^{\textup{upper}}(\bm{\sigma})-\bm{x}^{\textup{lower}}(\bm{\sigma})$.
\end{theorem}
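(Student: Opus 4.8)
The plan is to split the statement into two parts: first the reduction \eqref{volumeZ} of $\vol_{n}\left(\mathcal{Z}_{t}\right)$ to an integral over $\mathcal{W}_t\times[0,1]$, and then the explicit evaluation \eqref{jacobian} of the Jacobian determinant.

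For the first part, I would start from the identity $\mathcal{Z}_{t}\left(\{\bm{z}_0\}\right)=\bm{M}^{-1}\mathcal{X}_{t}\left(\{\bm{M}\bm{z}_0\}\right)$ established in Theorem \ref{Thmuminumax}. Since $\bm{x}\mapsto\bm{M}^{-1}\bm{x}$ is a linear bijection, the standard change-of-variables formula gives $\vol_{n}\left(\mathcal{Z}_{t}\right)=|\det(\bm{M}^{-1})|\,\vol_{n}\left(\mathcal{X}_{t}\right)=\vol_{n}\left(\mathcal{X}_{t}\right)/|\det(\bm{M})|$. It then remains to compute $\vol_{n}\left(\mathcal{X}_{t}\right)$ through the parametric map $\bm{\pi}$ of \eqref{mapx_s}. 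Theorem \ref{Thm:surjective} already guarantees that $\bm{\pi}:\mathcal{W}_t\times[0,1]\to\mathcal{X}_t$ is surjective; applying the area (change-of-variables) formula to $\bm{\pi}$, a map between two $n$-dimensional sets, yields $\vol_{n}\left(\mathcal{X}_{t}\right)=\int_{0}^{1}\!\int_{\mathcal{W}_t}|\det(\Jac\bm{\pi})|\,\differential\bm{\sigma}\,\differential\lambda$, which is exactly \eqref{volumeZ}.

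For the second part I would compute $\Jac\bm{\pi}$ column by column. The $\lambda$-column is immediate: $\partial\bm{\pi}/\partial\lambda=\bm{x}^{\textup{upper}}(\bm{\sigma})-\bm{x}^{\textup{lower}}(\bm{\sigma})=\bm{\zeta}(\bm{\sigma})$. For each $\sigma_j$-column, note from \eqref{upperlowerexplicit} that $\sigma_j$ enters only as the upper limit of one sub-integral and the lower limit of the adjacent one, and these two sub-integrals carry opposite alternating signs; hence by the Leibniz rule the two contributions add rather than cancel, producing $\partial\bm{x}^{\textup{upper}}/\partial\sigma_j=\pm 2\mu(\sigma_j)\bm{\xi}(t-\sigma_j)$ and $\partial\bm{x}^{\textup{lower}}/\partial\sigma_j=\mp 2\mu(\sigma_j)\bm{\xi}(t-\sigma_j)$ with opposite leading sign. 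Substituting into \eqref{mapx_s} gives $\partial\bm{\pi}/\partial\sigma_j=\pm 2(2\lambda-1)\mu(\sigma_j)\bm{\xi}(t-\sigma_j)$. I would then assemble $\Jac\bm{\pi}$ as the $n\times n$ matrix whose first $n-1$ columns are these scalar multiples of $\bm{\xi}(t-\sigma_j)$ and whose last column is $\bm{\zeta}(\bm{\sigma})$, factor the scalar $\pm 2(2\lambda-1)\mu(\sigma_j)$ out of the $j$-th column, and take absolute values. Using $2^{n-1}|2\lambda-1|^{n-1}=|4\lambda-2|^{n-1}$ and $\mu\geq 0$, this produces the prefactor $\prod_{j=1}^{n-1}\mu(\sigma_j)\cdot|(4\lambda-2)^{n-1}|$ times the determinant of the matrix with columns $\bm{\xi}(t-\sigma_1),\dots,\bm{\xi}(t-\sigma_{n-1}),\bm{\zeta}(\bm{\sigma})$, which is precisely \eqref{jacobian}.

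The main obstacle is the justification needed in the first part: the area formula delivers an exact equality for $\vol_{n}\left(\mathcal{X}_{t}\right)$ only if $\bm{\pi}$ covers $\mathcal{X}_t$ with multiplicity one almost everywhere, so surjectivity from Theorem \ref{Thm:surjective} alone is not enough. I would argue that, in the centered coordinates of that theorem where $\bm{x}^{\textup{lower}}=-\bm{x}^{\textup{upper}}$ (so $\mathcal{X}_t$ is centrally symmetric about $\bm{\eta}_t$), each interior point lies on exactly one chord through the center, and that the boundary parameterization $\bm{\sigma}\mapsto\bm{x}^{\textup{upper}}(\bm{\sigma})$ is injective up to a set of measure zero (the inadmissible $\bm{\sigma}$ and the locus where fewer than $n-1$ real roots occur). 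Hence $\bm{\pi}$ is injective off a null set and the change of variables holds as a genuine equality. The differentiation-under-the-integral-sign step and the column factorization are then routine once the sign bookkeeping in the Leibniz rule is handled carefully.
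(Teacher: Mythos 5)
Your proposal is correct and follows the same two-step route as the paper: factor out $1/|\det(\bm{M})|$ via the linear map $\bm{x}\mapsto\bm{M}^{-1}\bm{x}$, then integrate $\begin{vmatrix}\det\left(\Jac\bm{\pi}\right)\end{vmatrix}$ over $\mathcal{W}_t\times[0,1]$. Your column-by-column Jacobian computation --- $\partial\bm{\pi}/\partial\lambda=\bm{\zeta}(\bm{\sigma})$ and $\partial\bm{\pi}/\partial\sigma_j=\pm 2(2\lambda-1)\mu(\sigma_j)\bm{\xi}(t-\sigma_j)$, with the factor of $2$ arising because the Leibniz contributions from the two sub-integrals adjacent to $\sigma_j$ in \eqref{upperlowerexplicit} add rather than cancel --- is exactly what the paper compresses into the phrase ``direct computation,'' and your bookkeeping $2^{n-1}|2\lambda-1|^{n-1}=|(4\lambda-2)^{n-1}|$ together with $\mu\geq 0$ reproduces \eqref{jacobian}. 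The one place you genuinely diverge is the justification that the parameter integral equals the volume \emph{exactly}. The paper argues $\bm{\pi}\in C^{1}$ and invokes Sard's theorem, which guarantees only that the critical values form a Lebesgue-null set; it does not rule out $\bm{\pi}$ covering portions of $\mathcal{X}_t$ with multiplicity greater than one, which would make the integral an overcount. You correctly identify this as the crux and repair it via the area formula: central symmetry about $\bm{\eta}_t$ makes the chord through any non-central point unique, reducing everything to almost-everywhere injectivity of $\bm{\sigma}\mapsto\bm{x}^{\textup{upper}}(\bm{\sigma})$. That last claim you assert rather than prove (the natural route is that distinct interior $\bm{\sigma}$ are root sets of $\langle\bm{y},\bm{\xi}(t-\cdot)\rangle$ for essentially distinct normals $\bm{y}$, and coincidence of the corresponding boundary points can occur only on a negligible set of non-smooth boundary points), so your argument is not airtight on this point either --- but it is the same tacit assumption on which the paper's Sard-based proof silently rests, and making it explicit is an improvement over, not a defect relative to, the published argument.
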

\begin{proof}
Since $\mathcal{Z}_{t}\left(\{\bm{z}_0\}\right) = \bm{M}^{-1}\mathcal{X}_{t}\left(\{\bm{Mz}_0\}\right)$, and $\vol_{n}$ is translation invariant, we have 
\begin{align}
\vol_{n}\left(\mathcal{Z}_{t}\left(\{\bm{z}_0\}\right)\right) = \frac{1}{|\det(\bm{M})|}\vol_{n}\left(\mathcal{X}_{t}\left(\{\bm{0}\}\right)\right).
\label{TranslationInvariant}
\end{align}
So it suffices to compute $\vol_{n}\left(\mathcal{X}_{t}\left(\{\bm{0}\}\right)\right)$.

Using the parameterization \eqref{mapx_s}, we get 
\begin{align}
\differential\:\vol_{n}\left(\mathcal{X}_{t}\left(\{\bm{0}\}\right)\right) &= \begin{vmatrix}\operatorname{det}\left(\Jac\bm{\pi}\right)\end{vmatrix} \differential \sigma_1\hdots\differential \sigma_{n-1} \differential \lambda.
\label{diffvolume}
\end{align}
From \eqref{mapx_s}, $\bm{\pi}$ is linear in $\lambda\in[0,1]$. Using \eqref{upperlowerexplicit} and Leibniz rule, the derivatives $\dfrac{\partial}{\partial \sigma_i}\bm{\pi}(\bm{\sigma},\lambda)$ exist $\forall i\in\llbracket n-1\rrbracket$, and are continuous in $\mathcal{W}_t$ since both $\mu(\cdot)$, $\bm{\xi}(t-\cdot)$ are continuous, and the product of continuous functions is continuous. Thus, the map $\bm{\pi}$ is $C^{1}\left(\mathcal{W}_t \times [0,1]\right)$. By Sard's theorem \cite[Ch. 2]{milnor1997topology}, the \emph{set of critical values} (image of the set of \emph{critical points} in $\mathcal{W}_t\times [0,1]$ where $\operatorname{det}\left(\Jac\bm{\pi}\right)=0$) has $n$ dimensional Lebesgue measure zero. Therefore, \eqref{TranslationInvariant}-\eqref{diffvolume} yield \eqref{volumeZ}.

Using \eqref{ParametricBoundary} and Corollary \ref{CorollaryUpperLower}, direct computation gives
\begin{align*}
&\begin{vmatrix}\operatorname{det}\left(\Jac\bm{\pi}\right)\end{vmatrix} =\begin{vmatrix}\det\left(
\dfrac{\partial \bm{\pi}(\bm{\sigma}, \lambda)}{\partial \bm{\sigma}} \right. & \left.\dfrac{\partial \bm{\pi}(\bm{\sigma}, \lambda)}{\partial \lambda}\right)
\end{vmatrix}=\eqref{jacobian}.
\end{align*}
This completes the proof.
\end{proof}


\begin{example}\label{Example2dVol}
Consider the reach set $\mathcal{Z}_t(\{\bm{0}_{2\times 1}\})$ for \eqref{example2d} at $t=2$ as in \textbf{Example \ref{ExampleZt}}. We have 
\begin{align*}
\zeta_1&=2\left(\int_{0}^{\sigma_1} \!\!\!\!(t-\tau)~ \differential \tau -\int_{\sigma_1}^{t} \!\!\!\!(t-\tau) ~\differential \tau \right)= t^2 - 2(t-{\sigma_1})^{2},\\
\zeta_2&=2\left(\int_{0}^{\sigma_1} \!\!\!\! \differential \tau -\int_{\sigma_1}^{t}  \!\!\!\!\differential \tau  \right)=4{\sigma_1} -2t.
\end{align*}
Thus \eqref{jacobian} becomes
\begin{align}
\begin{vmatrix}\operatorname{det}\left(\Jac\bm{\pi}\right)\!\end{vmatrix}\! = &\mu({\sigma}_1)|4\lambda-2| \!\begin{vmatrix}\det\!\!\begin{pmatrix}
t-\sigma_1 & t^{2}-2(t-{\sigma_1})^{2}\\
1&4{\sigma_1} -2t
\end{pmatrix}\!\!\end{vmatrix}\!\nonumber\\
=& \mu({\sigma}_1)~|4\lambda-2| ~ |-\sigma_1^2 - (t-\sigma_1)^2|.
\label{AbsDetJacExample}
\end{align}
Using \eqref{MinvMExample} and \eqref{AbsDetJacExample}, formula \eqref{volumeZ} then yields
\begin{align*}
\vol_{2}\left(\mathcal{Z}_{2}\right)\!=&~\dfrac{11}{10}\!\int_{0}^{1} \!\!\!\int_{\sigma_1}^{2}
\!\!\mu(\sigma_{1})|4\lambda-2|\!\left(\sigma_1^2 + (2-\sigma_1)^2\right)\differential \sigma_1 \differential \lambda\nonumber\\
\approx&~ 0.3043,
\end{align*} 
using the same $\mu(\cdot)$ as in \textbf{Example \ref{ExampleZt}}, and the trapezoidal method with step-size $\Delta\tau=0.01$ to estimate the integral. 
\end{example}



\section{Concluding Remarks}\label{sec:conclusions}
This work demonstrates that the boundary and volume of a controllable single input LTI reach set with \emph{time-invariant} input range can be computed exactly from the boundary and volume of an integrator reach set with a \emph{time-varying} input range induced by the LTI system. Extending this line of ideas for multi-input LTI systems will comprise our future work.


\bibliographystyle{IEEEtran}
\bibliography{References.bib}

\end{document}